\newif\ifPDF
\newtheorem{theorem}{Theorem}[section]
\newcommand{\be}{\begin{equation}}
\newcommand{\ee}{\end{equation}}
\newcommand{\bea}{\begin{eqnarray}}
\newcommand{\eea}{\end{eqnarray}}
\newcommand{\sgn}{\operatorname{sgn}}
\newcommand{\E}{\varepsilon} 
\newcommand{\T}{\tau}
\newcommand{\ado}{\dot a}
\newcommand{\adt}{\ddot a}
\newcommand{\adth}{\dddot a}
\newcommand{\adf}{\ddddot a}
\newcommand{\bdo}{\dot b}
\newcommand{\bdt}{\ddot b}
\newcommand{\bdth}{\dddot b}
\newcommand{\bdf}{\ddddot b}
\newcommand{\ddo}{\dot d}
\newcommand{\ddt}{\ddot d}
\newcommand{\ddth}{\dddot d}
\newcommand{\ddf}{\ddddot d}
\newcommand{\xdo}{\dot x}
\newcommand{\xdt}{\ddot x}
\newcommand{\xdth}{\dddot x}
\newcommand{\Edo}{\dot \E}
\newcommand{\Edt}{\ddot \E}
\newcommand{\Edth}{\dddot \E}
\newcommand{\Edf}{\ddddot \E}
\newcommand{\Tdo}{\dot \T}
\newcommand{\Tdt}{\ddot \T}
\newcommand{\Tdth}{\dddot \T}
\newcommand{\Tdf}{\ddddot \T}
\newcommand{\fdo}{\dot f}
\newcommand{\fdt}{\ddot f}
\newcommand{\fdth}{\dddot f}
\newcommand{\Sdo}{\dot S}
\newcommand{\Sdt}{\ddot S}
\newcommand{\Sdth}{\dddot S}
\newcommand{\Sdf}{\ddddot S}
\newcommand{\Fdo}{\frac{d}{dc}}
\newcommand{\Fdt}{\frac{d^2}{dc^2}}
\newcommand{\Fdth}{\frac{d^3}{dc^3}}
\newcommand{\Fdf}{\frac{d^4}{dc^4}}
\newcommand{\Spo}{S_o^{'}}
\newcommand{\Spt}{S_o^{''}}
\newcommand{\Spth}{S_o^{'''}}
\newcommand{\Spf}{S_o^{''''}}
\newcommand{\Sc}{{\cal S}}
\newcommand{\pushright}[1]{\ifmeasuring@#1\else\omit\hfill$\displaystyle#1$\fi\ignorespaces}
\newcommand{\pushleft}[1]{\ifmeasuring@#1\else\omit$\displaystyle#1$\hfill\fi\ignorespaces}
\newenvironment{keywords}
{\noindent{\bf Key words.}\small}{\par\vspace{1ex}}
\title{{A symmetry breaking transition in the edge/triangle network model}}
\author{
Charles Radin\thanks{Department of Mathematics, University of Texas, Austin, TX 78712; radin@math.utexas.edu}  
\and Kui Ren \thanks{Department of Mathematics and ICES, University of Texas, Austin, TX 78712; ren@math.utexas.edu} 
\and Lorenzo Sadun\thanks{Department of Mathematics, University of Texas, Austin, TX 78712; sadun@math.utexas.edu} 
}
\begin{document}

\maketitle

\begin{abstract}
Our general subject is the emergence of phases, and phase transitions, in large networks subjected to a few variable constraints.  Our main result is the analysis, in the model using edge and triangle subdensities for constraints, of a sharp transition between two phases with different symmetries, analogous to the transition between a fluid and a crystalline solid.
\end{abstract}

\begin{keywords}
	Graph limits, entropy, bipodal structure, phase transitions, symmetry breaking
\end{keywords}


\section{Introduction}
\label{SEC:Intro}

Our general subject is the emergence of phases, and phase transitions,
in large networks subjected to a few variable constraints, which we
take as the densities of a few chosen subgraphs. We follow the line of
research begun in~\cite{RS1} in which, for  $k$ given constraints
on a large network one determines a global state on the network, by a
variational principle, from which one can compute a wide range of
global observables of the network, in particular the densities of all
subgraphs. A phase is then a region of constraint values in which 
all these global observables vary smoothly with the
constraint values, while transitions occur when the global state
changes abruptly. (See~\cite{KRRS1,KRRS2,RS1,RS2,RRS,Ko} and the survey~\cite{Rad}.) 
Our main focus is on the transition between two
particular phases, in the system with the two constraints of edge and
triangle density: phases in which the global states have different symmetry.

If one describes networks (graphs) on $n$ nodes by their $n\times n$ adjacency matrices, 
we will be concerned with \emph{asymptotics} as $n\to \infty$, and in particular limits of these
matrices considered as $0$-$1$ valued graphons. (Much of the relevant
asymptotics is a recent development \cite{BCL,BCLSV,LS1,LS2,LS3}; see
\cite{Lov} for an
encyclopedic treatment.)
Given $k$ subgraph densities as constraints, the asymptotic analysis
in~\cite{RS1} leads to the study of one or more $k$-dimensional
manifolds embedded in the infinite dimensional metric space $W$ of
(reduced) graphons, the points in the manifold being the emergent
(global) states of a phase of the network. That is, there are
smooth embeddings, of open connected subsets (called phases) of the
phase space $\Gamma \subset [0,1]^k$ of possible constraint
values, into $W$. The embeddings are obtained from the constrained
entropy density $s(P)$, a real valued function of the constraints
$P\in \Gamma$, through the variational principle~\cite{RS1,RS2}:
$s(P)=\sup\{\Sc(g)\,|\, C(g)=P\}$, the constraints being described by
$C(g)=P$, for instance edge density $\E(g)=P_1$ and triangle density
$\T(g)=P_2$, and $\Sc$ being the negative of the large deviation rate
function of Chatterjee-Varadhan~\cite{CV}.

The above framework is modeled on that of statistical physics in which
the system is the simultaneous states of many interacting particles,
the constraints are the invariants of motion (mass and energy
densities for simple materials) and the global states, `Gibbs states',
can be understood in terms of conditional probabilities~\cite{Ru2}. The relevant variational principle was proven in~\cite{Ru1}.
In contrast, for large graphs the constrained entropy optima turn out
to be much easier to analyze than Gibbs states. First, it has been
found that in all known cases entropy optima are `multipodal',
i.e.\ for each phase in any graph model they lie in some $M$
dimensional manifold in $W$ corresponding to a decomposition of all
the nodes into $M$ equivalence classes~\cite{KRRS1,KRRS2,Ko}. This brings
the embedding down into a fixed finite dimension, at least within each
phase, in place of the infinite dimensions of $W$. In practice, for
$k=2$ this often allows one to find an embedding into 4 dimensions, so
it only remains to understand how our 2 dimensional surface sits in
those 4 dimensions. The main goal of this paper is to study this near
a particular transition for edge/triangle constraints.
\begin{figure}[htbp]
\centering
\includegraphics[width=0.4\textwidth,height=0.35\textwidth]{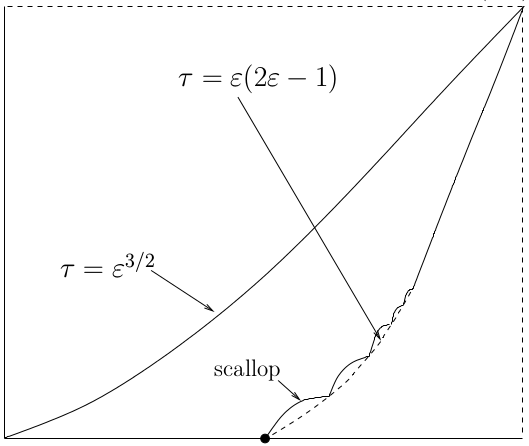}\hskip 1cm
\includegraphics[width=0.4\textwidth,height=0.35\textwidth]{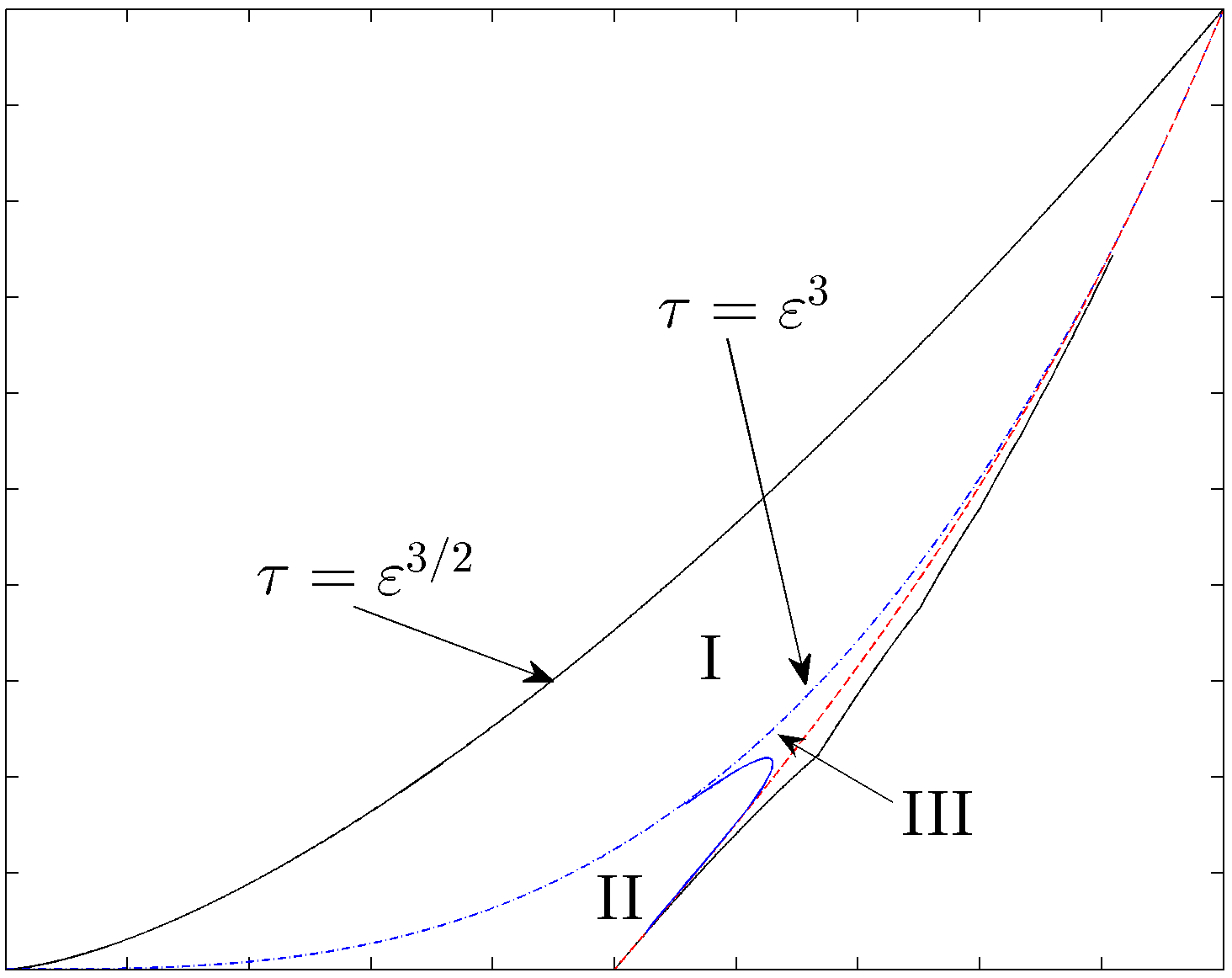}
\put(-105,-8){{\scriptsize $(1/2, 0)$}}
\put(-110,-17){{\scriptsize \rm edge density $\varepsilon$}}
\put(-325,-8){{\scriptsize $(1/2, 0)$}}
\put(-330,-17){{\scriptsize \rm edge density $\varepsilon$}}
\put(-200,105){\rotatebox{270}{\scriptsize\bf \rm triangle density $\tau$}}
\put(-415,105){\rotatebox{270}{\scriptsize\bf \rm triangle density $\tau$}}
\caption{Left: Boundary of the phase space for subgraph constraints edges and triangles. The scalloped region is exaggerated for better visualization; Right: Boundaries of phases I, II and III.}
\label{scallop}
\end{figure} 
The phase space $\Gamma$ of possible values of edge/triangle
constraints is the interior of the scalloped triangle of Razborov~\cite{Ra}, sketched in
Figure~\ref{scallop}. Extensive simulation in this model~\cite{RRS} shows the
existence of several phases, labeled $I$, $II$ and $III$ in Figure~
\ref{scallop}. 

The main focus of this paper is the transition between phases $II$ and
$III$. There is good simulation evidence, but there is still no proof
of this transition. What we will do is \emph{assume}, as seen in simulation,
that all entropy optimal graphons for edge/triangle densities near the
transition are bipodal, that is given by a piecewise constant function
of the form:
\begin{equation}\label{general-bipodal}
 g(x,y) = \begin{cases} a & x,y < c, \cr 
  d & x<c<y, \cr d & y<c<x, \cr b & x,y >
  c. \end{cases} 
\end{equation}
{{Bipodal graphons are generalizations of {bipartite graphons}, in which $a=b=0$.}}
 Here $c,a,d$ and $b$ are
constants taking values between 0 and 1. We do not assume these entropy
optimizers are unique. But from the bipodal assumption we can prove
uniqueness, and also derive, as shown
already in~\cite{RRS}, the equations determining the transition curve, and
prove that the optimal graphons for edge/triangle constraints  $(\E,\T)$ to the left of the
transition, have the form:
\begin{equation}\label{EQ:Graphon2}
g(x,y) = \begin{cases} \E - (\E^3-\T)^{1/3} & x,y < 1/2 \hbox{ or } x,y > 1/2 \cr
\E + (\E^3-\T)^{1/3} & x< \frac{1}{2} < y \hbox{ or } y < \frac{1}{2} < x,
\end{cases}
\end{equation}
which is highly symmetric in the sense that $c=1/2$ and
$a=b$. The main result of this paper is the derivation of
the lower order terms in $\E$ and $\T$ of the bipodal parameters
$a,b,c,d$ as
the constraints move to the right of the transition, i.e.\ we
determine how the symmetry is broken at the transition. Before we get
into details we should explain the connection between
this study of emergent phases and their transitions, and
other work on random graphs using similar terminology.

The word phase is used in many ways in the literature, but `emergent phase' is more
specific and refers to a coherent, large scale description of a system
of many similar components, in the following sense.
Consider large graphs, thought of as systems of many edges on a fixed
number of labeled nodes. To work at a large scale means to be
primarily concerned with global features or observables, for instance densities of
subgraphs. 

Finite graphs are fully
described (i.e.\ on a small scale) by their adjacency matrices. In the
graphon formalism these have a scale-free description in which each
original node is replaceable by a cluster of $m$ nodes, which makes for
a convenient analysis connecting large and small scale.

Phases are concerned with large graphs under a small number $k$ of
variable global constraints, for instance constrained by the 2
subdensities of edges and triangles. We say one or more phases \emph{emerge}
for such systems, corresponding to one or more open connected subsets
of parameter values, if: 1) there are \emph{unique} global
states (graphons) associated with the sets of constraint
values; 2) the correspondence defines a smooth $k$-dimensional surface
in the infinite dimensional space of states.

Note that not all achievable parameter values (the phase space) belong
to phases, in particular the boundary of the phase space does not. In
fact emergent phases are interesting in large part because they
exhibit interesting (singular) boundary behavior in the interior of
the phase space. For graphs we see this in at least two ways familiar
from statistical mechanics. In the edge/2-star model there is only one
phase but there are achievable parameter values with multiple states
associated (as in the liquid/gas transition); see Figure~\ref{edge-2star}.
\vskip.2truein
\begin{figure}[htbp]
\center{\includegraphics[width=2.5in]{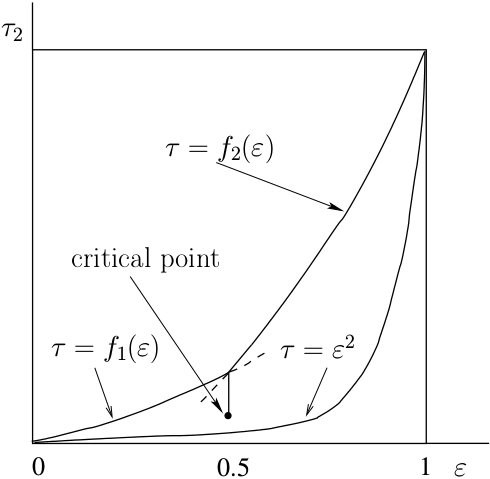}}
\caption{Transition ending in a critical point for edge/2-star model}
\label{edge-2star}
\end{figure}
In most models
constraint values on the Erd\"os-R\'enyi curve -- parameter values
corresponding to iid edges -- provide unique states
which however do not vary smoothly across the curve, and this provides 
boundaries of phases~\cite{RS1}. For edge/triangle constraints there is
another phase boundary associated with a loss of symmetry (as in the
transition between fluid and crystalline solid). In all models studied
so far phases make up all but a lower dimensional subset of the phase
space but this may not be true in general.

From an alternative vantage random graphs are commonly used to model
some particular network, the idea being to fit parameter values in a
parametric family of probability distributions on some space of
possible networks, so as to get a manageable probabilistic picture of
the one target network of interest; see~\cite{CD,N} and references
therein. The one parameter Erd\"os-R\'enyi (ER) family is often used
this way for percolation, as are multiparameter generalizations such
as stochastic block models and exponential random graph models
(ERGMs). Asymptotics is only a small part of such research, and for
ERGMs it poses some difficulties as is well described in~\cite{CD}

Newman et al considered phase transitions for ERGMs; see~\cite{PN} for
a particularly relevant example. Chatterjee-Varadhan
introduced in~\cite{CV} a powerful asymptotic formalism for random graphs
with their large deviations theorem, within the previously developed graphon
formalism. This was applied to ERMGs in~\cite{CD}, which
was then married with Newman's phase analysis in~\cite{RY} and numerous
following works.

We now contrast this use of the term phase with the emergent phase analysis
discussed above. In the latter a phase corresponds to an embedding of
part of the parameter space into the emergent (global) states of
the network, so transitions could be interpreted as singular behavior
\emph{as those global network states varied}. For ERGMs it was shown 
in~\cite{CD} that the analogous map from the parameter space $\Gamma$ into $W$, determined by optimization of a free energy rather
than entropy, is often many-to-one; for instance \emph{all} states of
the 2-parameter edge/2-star ERGM model actually belong to the
1-parameter ER family. Therefore transitions in
an ERGM are naturally understood as asymptotic behavior of the \emph{model}
{under variation of model parameters, rather than singular
  behavior among naturally varying global states of a network};
in keeping with its natural use, a transition in an ERGM says more
about the model than about constrained networks.

\section{Outline of the calculation}
\label{SEC:Outline}

The purpose of this calculation is to understand the transition, in the 
edge/triangle model, between phases $II$ and $III$ in Figure~\ref{scallop}. 
We take
as an assumption that our optimizing graphon at fixed constraint
$(\E,\T)$ is bipodal as in (\ref{general-bipodal}), and with $c$ being 
the size of the first cluster of nodes. We denote this graphon by $g_{abcd}$; see Figure~\ref{FIG:Setup}.
With the notation
\be
S_o(z)=-\frac{1}{2}[z\ln(z)+(1-z)\ln(1-z)],\ 0\le z\le 1
\ee
and
\be
\Sc(g)=\int_{[0,1]^2}S_o[g(x,y)]\,dxdy,
\ee
to obtain $s(P)$ we maximize $\Sc(g_{abcd})$ by varying the four parameters $(a,b,c,d)$
while holding $P=(\E,\T)$ fixed. 

It is not hard to check that the symmetric graphon with $a=b=\E+(\T-\E^3)^{1/3}$,
$c=\frac12$ and $d=\E-(\T-\E^3)^{1/3}$ is always a stationary point of the
functional $\Sc$. But is it a maximum? Near the choices of 
$(\E,\T)$ where it ceases to be a maximum, what does the actual maximizing
graphon look like? 

We answer this by doing our constrained optimization in stages. First we
fix $c$ and vary $(a,b,d)$ to maximize $\Sc(g)$ subject to the constraints
on $\E$ and $\T$. Call this maximum value $S(c)$. 
Since the graphon with parameters 
$(b,a,1-c,d)$ is equivalent to $(a,b,c,d)$, $S(c)$ is an even function of 
$c-\frac12$. We expand this function in a Taylor series:
\be \label{Taylor-S}  S(c) = S(\frac12) + \frac12 \ddot S(\frac12)(c-\frac12)^2 +
\frac{1}{24} \ddddot S(\frac12)(c-\frac12)^4 + \cdots \ee
where dots denote derivatives with respect to $c$. 
  
If $\Sdt(\frac12)$ is negative, then the symmetric graphon is stable 
against small changes in $c$. If $\Sdt(\frac12)$ becomes positive
(as we vary $\E$ and $\T$), then the local maximum of $S(c)$ at $c=\frac12$
becomes a local minimum. As long as $\Sdf(\frac12)<0$, 
new local maxima will appear at $c \approx \frac12
\pm \sqrt{-6 \Sdt(\frac12)/\Sdf(\frac12)}$. In this case, 
as we pass through the phase transition, we should expect the optimal 
$|c-\frac12|$ to be exactly zero on one side of the transition line
(i.e., in the symmetric phase), and to vary as the square root of the distance to the transition line on
the other (asymmetric) side. 

If $\Sdf(\frac12)$ is positive when $\Sdt(\frac12)$ passes through
zero, something very different happens. Although $c=\frac12$ is a 
{\em local} maximum whenever $\Sdt(\frac12)<0$, there are local maxima
elsewhere, at locations determined largely by the higher order terms in
the Taylor expansion~\eqref{Taylor-S}. When $\Sdt(\frac12)$ passes
below a certain threshold, one of these local maxima will have a higher
entropy than the local maximum at $c=\frac12$, and the optimal value of 
$c$ will change discontinuously. 

The calculations below give analytic formulas for $\Sdt(\frac12)$ and 
$\Sdf(\frac12)$ as functions of $\E$ and $\T$. We can then evaluate
these formulas numerically to fix the location of the phase transition curve
and determine which of the previous two paragraphs
more accurately describes the phase transition near a given point on 
that curve.  We then compare these results to numerical sampling 
that is done {\em without} the simplifying assumption that optimizing
graphons are bipodal. 

Our results  indicate that
\begin{itemize}
\item The assumption of bipodality is justified, and  
\item $\ddddot S(\frac12)$ remains negative on the phase transition curve,
implying that $c$ does not change discontinuously. Rather, $|c-\frac12|$
goes as the square root of the 
distance to phase transition curve, in the 
asymmetric bipodal phase.  
\end{itemize}

\section{Exact formulas for $\ddot S(\frac12)$ 
and $\ddddot S(\frac12)$}
\label{SEC:Derivation}

We now present the details of our perturbation calculations in detail. 

\subsection{Varying $(a,b,d)$ for fixed $c$.}

The first step in the calculation is to derive the variational equations
for maximizing $\Sc(g_{abcd})$ for fixed $(\E, \T, c)$. We first express the 
edge and triangle densities and $S=\Sc(g_{abcd})$ as functions of $(a,b,c,d)$:
\begin{figure}[ht]
\centering
\includegraphics[angle=0,width=0.3\textwidth]{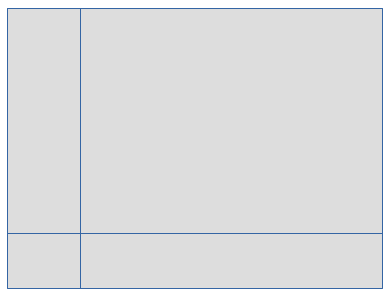} 
\put(-130,10){$a$}
\put(-60,10){$d$}
\put(-130,60){$d$}
\put(-60,60){$b$}
\put(-114,-5){$c$}
\caption{The parameter set $(a, b, c, d)$ of a bipodal graphon.}
\label{FIG:Setup}
\end{figure}

\begin{eqnarray}
\E&=&c^2a+2c(1-c)d+(1-c)^2b \cr 
\T &=& c^3a^3+3c^2(1-c)ad^2+3c(1-c)^2bd^2+(1-c)^3b^3
\cr
S&=&c^2S_o(a)+(1-c)^2S_o(b)+2c(1-c)S_o(d)
\end{eqnarray}
Next we compute the gradient of these quantities with respect to $(a,b,d)$, where $\nabla_3$ is an abbreviation
for $(\partial_a, \partial_b, \partial_d)$:

\begin{eqnarray}
\nabla_3 \, \E = &&(c^2,(1-c)^2,2c(1-c))
\cr
\nabla_3\, \T = &&(3c^3a^2+3c^2(1-c)d^2,3(1-c)^3b^2+3c(1-c)^2d^2, \cr 
&& \qquad \qquad \qquad 6c^2(1-c)ad +6c(1-c)^2bd)
\cr
\nabla_3\,
S = &&(c^2\Spo(a),(1-c)^2\Spo(b),2c(1-c)\Spo(d))
\end{eqnarray}

At a maximum of $S$ (for fixed $c$, $\E$ and $\T$), $\nabla_3\,S$ must be a linear combination of 
$\nabla_3\,\E$ and $\nabla_3\,\T$ so
\begin{eqnarray}
0 = &&\det \begin{pmatrix}
\nabla_3\,\E \cr
 \nabla_3\,\T \cr
\nabla_3\,S \cr
\end{pmatrix}
\cr
= &&c^2(1-c)^26c(1-c)\cr
&&\times \det\begin{pmatrix}
1&1&1\cr
a^2c+d^2(1-c)&d^2c+b^2(1-c)&cad+(1-c)bd\cr
\Spo(a)&\Spo(b)&\Spo(d)\cr
\end{pmatrix} \eea
Expanding the determinant and dividing by $6c^3(1-c)^3$, we obtain 
\bea 0 = f(a,b,c,d) & :=&\phantom{+} \Spo(a)[cd(a-d)-(1-c)b(b-d)] \cr 
&&+\Spo(b)[ca(a-d)-(1-c)d(b-d)]\cr
&&+\Spo(d)[c(d^2-a^2)+(1-c)(b^2-d^2)]. \eea

\subsection{Strategy for varying $c$}

We just showed how optimizing the entropy for fixed $\E$, $\T$, and $c$ 
is equivalent to setting $f=0$. From now on we treat $f=0$ as
an additional constraint. The three constraint 
equations $\E=\E_o$, $\T=\T_o$, $f=0$ 
define a curve in $(a,b,d,c)$
space, which we can parametrize by $c$. Since $\E$, $\T$ and $f$ are constant, derivatives 
of these quantities along the curve must be zero. By evaluating these derivatives at $c=1/2$,  
we will derive
formulas for $\ado(1/2)$, $\bdo(1/2)$, etc., where a dot denotes a derivative with respect 
to $c$ along this curve. These values then determine  
$\ddot S(\frac12)$ and $\ddddot S(\frac12)$. 

We also make use of symmetry. The parameters $(a,b,c,d)$ and $(b,a,1-c,d)$ describe the same reduced graphon. Thus, 
if the conditions $\E=\E_0$, $\T = \T_0$, $f=0$ trace out a unique curve in $(a,b,c,d)$ space, then we must have
\bea \label{sym-eq}
\ado(1/2)+ \bdo(1/2) &=&  \adt(1/2)-\bdt(1/2) = \adth(1/2)+\bdth(1/2)
= \adf(1/2)-\bdf(1/2) \cr &=&\ddo(1/2)=\ddth(1/2)=0,
\eea 
since all of these quantities are odd under the interchange $(a,b,c,d) \leftrightarrow (b,a,1-c,d)$. 

In fact, it is possible to derive the relations (\ref{sym-eq}), and similar relations for higher derivatives, 
{\em without} assuming uniqueness of the curve. We proceed by induction on
the degree $k$ of the derivatives involved.
At 0-th order we already have that $a(1/2)=b(1/2)$. 

The $k$th derivative of $\E$ and $\T$, evaluated at $c=1/2$, are
\bea \E^{(k)}(1/2) = &&\frac{1}{4} \left ( a^{(k)}(1/2) + b^{(k)}(1/2) + 2 d^{(k)}(1/2) \right )\cr
&&+ \hbox{(lower order derivatives of $a,b,d$)} \cr 
\T^{(k)}(1/2) &=& \frac38 \left ( (a(1/2)^2+d(1/2)^2) a^{(k)}(1/2) + (b(1/2)^2+d(1/2)^2) b^{(k)}(1/2)\right) \cr && + \frac34 \left (a(1/2)+b(1/2))d(1/2) d^{(k)}(1/2 \right ) \cr && \qquad + \hbox{(lower order derivatives and their products)}
\eea

When $k$ is odd, the lower-order terms vanish by induction, 
and we are left with 
\bea \label{vanish-1} 0 &=& 
\phantom{( (a(1/2)^2 + d(1/2)^2)}
a^{(k)}(1/2) + b^{(k)}(1/2) + \phantom{4a(1/2)d(1/2)}
2 d^{(k)}(1/2) \cr
0 &= & (a(1/2)^2 + d(1/2)^2) (a^{(k)}(1/2) +b^{(k)}(1/2) ) + 4a(1/2)d(1/2) d^{(k)}(1/2), \eea
since $b(1/2)=a(1/2)$. The matrix 
$\begin{pmatrix} 1 & 2 \cr a(1/2)^2 + d(1/2)^2 & 4a(1/2)d(1/2) \end{pmatrix}$ is non-singular, having
determinant $-2(d(1/2)-a(1/2))^2$, so
$a^{(k)}(1/2)+ b^{(k)}(1/2)= d^{(k)}(1/2) = 0$. 

When $k$ is even, the $k$th derivative of $f$, evaluated at $c=1/2$,  is of the form
\bea \hbox{(Even function w.r.t the interchange)}
&\times&(a^{(k)}(1/2)-b^{(k)}(1/2))
\cr 
&+& \hbox{lower order derivatives}\eea
As before, the lower order terms vanish by induction, and the even function 
is nonzero, so we get $a^{(k)}(1/2)=b^{(k)}(1/2)$. 

Henceforth we will freely use the relations in~\eqref{sym-eq} to 
simplify our expressions. 
  
Here are the steps of the calculation:
\begin{itemize}
\item Setting $\fdo(1/2)=0$ determines $\ado(1/2)$. 
\item Setting $\Edt(1/2)=\Tdt(1/2)=0$ determines $\adt(1/2)$ 
and $\ddt(1/2)$ (in terms of $\ado(1/2)$). 
\item Setting $\fdth(1/2)=0$ determines $\adth(1/2)$.
\item Setting $\Edf(1/2)=\Tdf(1/2)=0$ determines $\adf(1/2)$ and $\ddf(1/2)$. 
\item Once all derivatives of $(a,b,d)$ up to 4th order are 
evaluated at $c=1/2$, we explicitly compute $\ddot S(1/2)$ and 
$\ddddot S(1/2)$. 
\end{itemize}

\subsection{Derivatives of $\E$ and $\T$}

The edge density and its first four derivatives are:
\bea
\E&=&c^2a+2(c-c^2)d+(1-c)^2b
\cr
\Edo &=& c^2\ado +2(c-c^2)\ddo + (1-c)^2\bdo +2ca-2(1-c)b+2(1-2c)d
\cr
\Edt &=& c^2\adt +(1-c)^2\bdt +2(c-c^2)\ddt +4c\ado -4(1-c)\bdo
+4(1-2c)\ddo +2a+2b-4d
\cr
\Edth &=&  c^2\adth +(1-c)^2\bdth +2c(1-c)\ddth +6c\adt -6(1-c)\bdt
+6(1-2c)\ddt +6\ado +6\bdo -12\ddo
\cr
\Edf &=& c^2 \adf +(1-c)^2 \bdf +2c(1-c)\ddf +8c\adth -8(1-c)\bdth
+8(1-2c)\ddth \cr && + 12\adt +12\bdt -24\ddt  
\eea

Evaluating at $c=1/2$ and applying the symmetries (\ref{sym-eq}) gives:
\bea \label{Edots} \Edo(1/2) & = & \frac{1}{4} \left(\ado(1/2)+\bdo(1/2)+2\ddo(1/2) \right )=0 \cr 
\Edt(1/2) &=& \frac14 \left (\adt(1/2)+\bdt(1/2) +2\ddt(1/2) \right)+4\ado(1/2)+4(a(1/2)-d(1/2)) \cr 
& =& \frac12 \left ( \adt(1/2) +\ddt(1/2) \right )+4\ado(1/2)+4(a(1/2)-d(1/2)) \cr 
\Edth(1/2) & =& \frac14 \Big (\adth(1/2)+\bdth(1/2)+2\ddth(1/2) \Big ) =0 \cr 
\Edf(1/2) &=& \frac14 \Big (\adf(1/2) +\bdf(1/2)+2\ddf(1/2) \Big )+8\adth(1/2)+24\adt(1/2) -24\ddt(1/2) \cr 
&=& \frac12 \Big (\adf(1/2)+\ddf(1/2) \Big )+8\adth(1/2) +24\left (\adt(1/2) -\ddt(1/2)\right )
\eea

Next we compute the triangle density and its derivatives:
\bea
\T&=& c^3a^3+3c^2(1-c)ad^2+3c(1-c)^2 bd^2+(1-c)^3b^3
\cr
\Tdo&=& c^3\frac{d}{dc}(a^3)+3c^2(1-c)\frac{d}{dc}(ad^2)
+3c(1-c)^2\frac{d}{dc}(bd^2)+(1-c)^3\frac{d}{dc}(b^3)\cr
&& +3c^2a^3+3(2c-3c^2)ad^2+3(1-4c+3c^2)bd^2-3(1-c)^2b^3\cr 
\Tdt &=& c^3\frac{d^2}{dc^2}(a^3)+3c^2(1-c)\frac{d^2}{dc^2}(ad^2)+3c(1-c)^2\frac{d^2}{dc^2}(bd^2)\cr
&&+(1-c)^3\frac{d^2}{dc^2}(b^3)+6c^2\frac{d}{dc}(a^3)+6(2c-3c^2)\frac{d}{dc}(ad^2)\cr
&&+6(1-4c+3c^2)\Fdo(bd^2)-6(1-c)^2\Fdo(b^3)\cr
&&+6ca^3+6(1-3c)ad^2+6(3c-2)bd^2+6(1-c)b^3 \cr 
\Tdth &=& c^3\frac{d^3}{dc^3}(a^3)+3c^2(1-c)\frac{d^3}{dc^3}(ad^2)+3c(1-c)^2\frac{d^3}{dc^3}(bd^2)\cr
&&+(1-c)^3\frac{d^3}{dc^3}(b^3)+9c^2\frac{d^2}{dc^2}(a^3)+9(2c-3c^2)\frac{d^2}{dc^2}(ad^2)\cr
&&+9(1-4c+3c^2)\frac{d^2}{dc^2}b(d^2)-9(1-c)^2\frac{d^2}{dc^2}(b^3)+18c\frac{d}{dc}(a^3)\cr
&&+18(1-3c)\frac{d}{dc}(ad^2)+18(3c-2)\frac{d}{dc}(bd^2)+18(1-c)\frac{d}{dc}(b^3)\cr
&&+6a^3-18ad^2+18bd^2-6b^3 \cr
\Tdf&=& c^3\frac{d^4}{dc^4}(a^3)+3c^2(1-c)\Fdf(ad^2)+3c(1-c)^2\Fdf(bd^2)+(1-c)^3\Fdf(b^3)\cr
&&+12c^2\Fdth(a^3)+12(2c-3c^2)\Fdth(ad^2)+12(1-4c+3c^2)\Fdth(bd^2)\cr
&&-12(1-c)^2\Fdth(b^3)+36c\Fdt(a^3)+36(1-3c)\Fdt(ad^2)\cr
&&+36(3c-2)\Fdt(bd^2)+36(1-c)\Fdt(b^3)+24\Fdo(a^3)\cr
&&-72\Fdo(ad^2)+72\Fdo(bd^2)-24\Fdo(b^3)
\eea

Once again we evaluate at $c=1/2$, making use of symmetry to simplify terms:
\bea
\Tdo(1/2)& =& (1/8)[\Fdo(a^3)+3\Fdo(ad^2)+3\Fdo(bd^2)+\Fdo(b^3)]
+(3/4)(a^3+ad^2-bd^2-b^3) \cr 
&=& (1/8)[\Fdo(a^3)+3\Fdo(ad^2)+3\Fdo(bd^2)+\Fdo(b^3)] \cr 
& = & \frac38 [(a^2+d^2)(\ado +\bdo) + 4ad \ddo] = 0 
\eea
where all quantities on the right hand side are evaluated at $c=1/2$.
Continuing to higher derivatives, 
\bea
\Tdt(1/2)&=&(1/8)[\Fdt(a^3)+3\Fdt(ad^2)+3\Fdt(bd^2)+\Fdt(b^3)]\cr
&&+(3/2)[\Fdo(a^3)+\Fdo(ad^2)-\Fdo(bd^2)-\Fdo(b^3)]\cr
&&+ 3[a^3 -ad^2-bd^2+b^3] \cr 
&=& (1/4)[\Fdt(a^3)+3\Fdt(ad^2)]
+3[\Fdo(a^3)+\Fdo(ad^2)]\cr
&+&6(a^3-ad^2).
\eea

\be \Tdth=0
\ee

\bea
\Tdf(1/2)&=&(1/8)[\Fdf(a^3)+3\Fdf(ad^2)+3\Fdf(bd^2)+\Fdf(b^3)]\cr
&&+3[\Fdth(a^3)+\Fdth(ad^2)-\Fdth(bd^2)-\Fdth(b^3)]\cr
&&+18[\Fdt(a^3)-\Fdt(ad^2)-\Fdt(bd^2)+\Fdt(b^3)]\cr
&&+24[\Fdo(a^3)-3\Fdo(ad^2)+3\Fdo(bd^2)-\Fdo(b^3)]\cr
&=&(1/4)[\Fdf(a^3)+3\Fdf(ad^2)]+6[\Fdth(a^3)+\Fdth(ad^2)]\cr
&&+36[\Fdt(a^3)-\Fdt(ad^2)]+48[\Fdo(a^3)-3\Fdo(ad^2)]
\eea

To continue further we must expand the derivatives of $a^3$ and $ad^2$:

\bea
\Fdo(a^3)&=&3a^2\ado
\cr
\Fdt(a^3) &=& 3a^2\adt+6a\ado^2
\cr
\Fdth(a^3)&=& 3a^2\adth+18a\ado\adt +6\ado^3
\cr
\Fdf(a^3)&=& 3a^2\adf+24a\ado\adth+18a\adt^2+36\ado^2 \adt
\eea

\bea
\Fdo(ad^2)&=& \ado d^2+2ad\ddo
\cr
\Fdt(ad^2)&=&
\adt d^2+4\ado d\ddo+2a\ddo^2+2ad\ddt
\cr
\Fdth(ad^2)&=& \adth d^2 +6\adt d\ddo +6\ado d\ddt +6\ado \ddo^2
+2ad \ddth + 6a\ddo \ddt
\cr
\Fdf(ad^2)&=&\adf d^2+8\adth d\ddo+12\adt d\ddt+12\adt \ddo^2\cr
&&+ 8\ado d\ddth+24\ado \ddo \ddt+2ad\ddf+8a\ddo \ddth+6a\ddt^2
\eea

At $c=1/2$, $\ddo=\ddth=0$, so this simplifies to:
\bea
\Fdo(ad^2)(1/2)&=&\ado d^2
\cr
\Fdt(ad^2)(1/2)&=&\adt d^2+2ad\ddt
\cr
\Fdth(ad^2)(1/2)&=&\adth d^2+6\ado d \ddt
\cr
\Fdf(ad^2)(1/2) &=&\adf d^2+12\adt d \ddt \cr && +6a(1/2) \ddt^2+2ad \ddf,
\eea
all evaluated at $c=1/2$.

Plugging back in, this yields
\bea
\Tdt(1/2) &=&(1/4)[\Fdt(a^3)+3\Fdt(ad^2)]+3[\Fdo(a^3)+\Fdo(ad^2)]+6(a^3-ad^2)\cr
&=&(1/4)[3a^2 \adt+6a\ado^2+3\adt d^2 +6ad\ddt]
+3[3a^2\ado+\ado d^2]+6a(a^2-d^2)\cr
&=&(3/4)[(a^2+d^2)\adt+2ad\ddt]+(3/2)a\ado^2
+3(3a^2+d^2)\ado
+6a(a^2-d^2)
\eea
and \newpage
\bea \label{Tdf-eq}
\Tdf(1/2) &=&(1/4)[\Fdf(a^3)+3\Fdf(ad^2)]+6[\Fdth(a^3)+\Fdth(ad^2)]\cr
&&+36[\Fdt(a^3)-\Fdt(ad^2)]+48[\Fdo(a^3)-3\Fdo(ad^2)]\cr
&=&(1/4)[3a^2\adf+24a\ado \adth+18a\adt^2 +36\ado^2 \adt\cr
&&+3d^2\adf+36d\ddt \adt+18a\ddt^2+6ad\ddf]\cr
&&+6[3a^2\adth+18a\ado\adt+6\ado^3+\adth d^2+6\ado d\ddt]\cr
&&+36[3a^2\adt+6a\ado^2-d^2\adt -2ad\ddt]\cr
&&+48[3a^2\ado-3d^2\ado],
\eea
where all quantities on the right hand side of these equations are evaluated at $c=1/2$.

\subsection{Derivatives of $f(a,b,c,d)$}

Taking the derivative of 
\bea
f(a,b,c,d) &=&\phantom{+}\Spo(a)[cd(a-d)-(1-c)b(b-d)]\cr
&&+\Spo(b)[ca(a-d)-(1-c)d(b-d)]\cr
&&+\Spo(d)[c(d^2-a^2)+(1-c)(b^2-d^2)]
\eea
with respect to $c$ (and applying the chain rule) then yields 
\bea
\fdo&=&\Spo(a)[d(a-d)+b(b-d)+c(-2d\ddo+a\ddo+d\ado)
-(1-c)(2b\bdo-b\ddo-d\bdo)]\cr
&&+\Spo(b)[a(a-d)+d(b-d)+c(2a\ado-\ado d-d\ado)
-(1-c)(b\ddo+d\bdo-2d\ddo)]\cr
&&+\Spo(d)[(d^2-a^2)-(b^2-d^2)+2c(d\ddo-a\ado)+2(1-c)(b\bdo-d\ddo)]\cr
&&+\ado\Spt(a)[cd(a-d)-(1-c)b(b-d)]\cr
&&+\bdo\Spt(b)[ca(a-d)-(1-c)d(b-d)]\cr
&&+\ddo\Spt(d)[c(d^2-a^2)+(1-c)(b^2-d^2)]
\eea

At $c=1/2$ this simplifies to 
\bea
\fdo(1/2)&=&\Spo(a)[a^2-d^2+(1/2)d\ado+(1/2)d\bdo-b\bdo]\cr
&&+\Spo(b)[a^2-d^2+a\ado-(1/2)d\ado -(1/2)d\bdo]\cr
&&+\Spo(d)[2d^2-2a^2-a\ado+b\bdo]\cr
&&+\ado\Spt(a)[-(1/2)(a-d)^2]+\bdo\Spt(b)[(1/2)(a-d)^2]\cr
&=&\Spo(a)[2a^2-2d^2+2a\ado]+\Spo(d)[2d^2-2a^2-2a\ado]\cr
&&-\ado \Spt(a)(a-d)^2,
\eea
where all terms on the right are evaluated at $c=1/2$. Setting this equal to zero then yields
\be
0= 2(a^2-d^2)[\Spo(a)-\Spo(d)]
+\ado [2a(\Spo(a)-\Spo(d))-\Spt(a)(a-d)^2],
\ee

\be \label{ado-eq}
\ado(1/2)=\frac{2(d^2-a^2)[\Spo(a)-\Spo(d)]}{2a[\Spo(a)-\Spo(d)]-\Spt(a)(a-d)^2},
\ee
evaluated at $c=1/2$.

Before computing higher derivatives of $f$, we go back and use this value of $\ado(1/2)$ to 
compute $\adt(1/2)$ and $\ddt(1/2)$:

\be \label{Edt-eq}
0=2 \Edt(1/2)=\adt+\ddt+8\ado+8(a-d),
\ee
where all quantities are computed at $c=1/2$. Likewise, 
\be \label{Tdt-eq}
0=\frac43 \Tdt=(a^2+d^2)\adt +2ad\ddt+2a\ado^2
+4(3a^2+d^2)\ado +8a(a^2-d^2).
\ee
However, $2ad$ times equation (\ref{Edt-eq}) is 
\be
0=2ad\adt +2ad\ddt+16ad\ado+16ad(a-d)
\ee
Subtracting these equations gives 
\bea
0&=&(a-d)^2\adt+2a\ado^2+(12a^2+4d^2 -16ad)\ado
+8a^3 -8ad^2 -16a^2d+16ad^2
\cr
&=&(a-d)^2 \adt +2a\ado^2 +4(a-d)(3a-d)\ado +8a(a-d)^2
\eea

Solving for $\adt$ then gives:
\be \label{adt-eq1}
\adt(1/2)=\frac{-2a(\ado^2 -4(a-d)(3a-d)\ado -8a(a-d)^2}{(a-d)^2},
\ee
where all quantities are evaluated at $c=1/2$. 
It is convenient to express this in terms of the ratio $\alpha := \frac{\ado(1/2)}{a(1/2)-d(1/2)}$:
\be \label{adt-eq2}
\adt(1/2)=-2a(1/2) \alpha^2-4\alpha(3a(1/2)-d(1/2) )-8a(1/2).
\ee
We then obtain $\ddt(1/2)$ from equation (\ref{Edt-eq}):
\bea \label{ddt-eq}
\ddt(1/2)&=&-\adt(1/2)-8\ado(1/2) -8a(1/2)+8d(1/2)\cr
&=&2a(1/2) \alpha^2 +4\alpha (a(1/2)+d(1/2)) +8d(1/2).
\eea

Computing higher derivatives of $f$ involves organizing many terms. We use the notation $x_i$ to denote one of $\{a,b,d\}$,
and $f_i$ to denote a partial derivative with respect to $i$. Since $f$ is linear in $c$, we need only take one partial derivative in 
the $c$ direction, but arbitrarily many in the other directions. 

\be \label{fdo-eq}
\fdo=f_c +\sum_i f_i \xdo_i.
\ee

\be
\fdt=2\sum_i f_{ci}\xdo_i + \sum_i f_i\xdt_i + \sum_{ij}f_{ij} \xdo_i \xdo_j,
\ee
since $f_{cc}=0$. 

\bea
\fdth=&=&3\sum_i f_{ci} \xdt_i + 3\sum_{ij} f_{cij} \xdo_i \xdo_j
+\sum_i f_i\xdth_i
+3\sum_{i,j} f_{ij}\xdt_i \xdo_j +\sum_{ijk} f_{ijk}\xdo_i \xdo_j
\xdo_k\cr
&=&3f_{ca}\adt +3f_{cb}\bdt +3f_{cd} \ddt +3f_{caa}\ado^2+
+6f_{cab}\ado \bdo \cr
&&+6f_{cad}\ado \ddo +3f_{cbb}\bdo^2 +6f_{cbd}\bdo \ddo +3f_{cdd}\ddo^2
  \cr
&&+f_a\adth +f_b\bdth +f_d\ddth +3f_{aa}\ado \adt +3f_{ab}\ado \bdt
  +3f_{ad}\ado \ddt\cr
&&+3f_{ab}\bdo \adt +3f_{bb}\bdo \bdt +3f_{bd}\bdo \ddt +3f_{ad}\adt
  \ddo +3f_{bd}\bdt \ddo\cr
&&+3f_{dd}\ddt \ddo +f_{aaa}\ado^3 +3f_{aab}\ado^2 \bdo
  +3f_{aad}\ado^2 \ddo\cr
&&+3f_{abb}\ado \bdo^2 + 6f_{abd}\ado \bdo \ddo +3f_{add}\ado \ddo^2
  +3f_{bbd}\bdo^2 \ddo\cr
&&+3f_{bdd}\bdo \ddo^2 +f_{ddd}\ddo^3 +f_{bbb}\bdo^3
\eea

When $c=1/2$ we have $\ddo=\ddth=0$, so many of the terms vanish. We then have 
\bea
\fdth(1/2)&=&3f_{ca}\adt +3f_{cb}\bdt+3f_{cd}\ddt +3f_{caa}\ado^2
+6f_{cab}\ado \bdo\cr
&&+3f_{cbb}\bdo^2 +f_a \adth +f_b \bdth +3f_{aa}\ado \adt +3f_{ab}\ado
\bdt\cr
&&+3f_{ad}\ado \ddt+3f_{ab}\bdo \adt +3f_{bb}\bdo \bdt +3f_{bd}\bdo
\ddt\cr
&&+f_{aaa}\ado^3 +3f_{aab}\ado^2 \bdo +3f_{abb}\ado \bdo^2
+f_{bbb}\bdo^3 \cr
&=&(3f_{ca}+3f_{cb})\adt+3f_{cd}\ddt+(3f_{caa}-6f_{cab}+3f_{cbb})\ado^2\cr
&&+(f_a -f_b )\adth + (3f_{aa}+3f_{ab}-3f_{ab}-3f_{bb})\ado \adt\cr
&&+(3f_{ad}-3f_{bd})\ado
  \ddt+(f_{aaa}-3f_{aab}+3f_{abb}-f_{bbb})\ado^3,
\eea
evaluated at $c=1/2$, where in the last step we have also used $\bdo(1/2)=-\ado(1/2)$, 
$\bdt(1/2)=\adt(1/2)$ and  $\bdth(1/2)=-\adth(1/2)$. Solving for $\adth(1/2)$ then gives
\bea \label{adth-eq1}
\adth(1/2)&= \Big[ & 3(f_{ca}+f_{cb})\adt+3f_{cd}\ddt+3(f_{caa}-2f_{cab}+f_{cbb})\ado^2\cr
&&+3(f_{aa}-f_{bb})\ado \adt +3(f_{ad}-f_{bd})\ado \ddt 
\cr && +(f_{aaa}-3f_{aab}+3f_{abb}-f_{bbb})\ado^3 \Big ]/(f_b -f_a),
\eea
evaluated at $c=1/2$. What remains is do compute the partial derivatives of $f$ that appear in 
equation (\ref{adth-eq1}).

This is a long but straightforward exercise in calculus:
\bea
f&=&\phantom{+}\Spo(a)[cd(a-d)-(1-c)b(b-d)]\cr
&&+\Spo(b)[ca(a-d)-(1-c)d(b-d)]\cr
&&+\Spo(d)[c(d^2-a^2)+(1-c)(b^2-d^2)]
\eea

\bea
f_a&=&\phantom{+}\Spt(a)[cd(a-d)-(1-c)b(b-d)]\cr
&&+\Spo(a)(cd)+\Spo(b)(2ac-cd)+\Spo(d)(-2ac) \cr \cr 
f_b&=&\phantom{+}\Spt(b)[ca(a-d)-(1-c)d(b-d)]\cr
&&+\Spo(a)[(1-c)d-2(1-c)b]+\Spo(b)[-(1-c)d]+\Spo(d)[2(1-c)b]
\cr \cr 
f_c&=&\phantom{+}\Spo(a)[ad-d^2+b^2-bd]+\Spo(b)[a^2-ad+bd-d^2]\cr
&&+\Spo(d)[2d^2-a^2-b^2]
\cr\cr
f_d&=&\phantom{+}\Spt(d)[c(d^2-a^2)+(1-c)(b^2-d^2)]\cr
&&+\Spo(a)[-2cd+ac+(1-c)b]+\Spo(b)[-ac+2(1-c)d-(1-c)b]\cr
&&+\Spo(d)[2cd-2(1-c)d]
\eea

\bea
f_{ac}&=&\Spt(a)[ad-d^2+b^2-bd]+d\Spo(a)+(2a-d)\Spo(b)-2a\Spo(d)
\cr\cr
f_{bc}&=&\Spt(b)[a(a-d)+d(b-d)]+\Spo(a)(-d+2b)+\Spo(b)d-2b\Spo(d)
\cr\cr
f_{dc}&=&\Spt(d)[2d^2-a^2-b^2]+\Spo(a)[a-b-2d]+\Spo(b)[-a-2d+b]+\Spo(d)4d
\cr\cr
f_{aa}&=&\Spth(a)[cd(a-d)-(1-c)b(b-d)]+\Spt(a)(2cd)+2c\Spo(b)-2c\Spo(d)
\cr\cr
f_{bb}&=&\phantom{+}\Spth(b)[ca(a-d)-(1-c)d(b-d)]\cr
&&+\Spt(b)(-2(1-c)d)-2(1-c)\Spo(a)+2(1-c)\Spo(d)
\cr\cr
f_{ad}&=&\Spt(a)[-2cd+ca+(1-c)b]+c\Spo(a)-c\Spo(b)-2ac\Spt(d)
\cr\cr
f_{bd}&=&\phantom{+}\Spt(b)[-ac+2(1-c)d-(1-c)b]+(1-c)\Spo(a)-(1-c)\Spo(b)\cr
&&+2(1-c)b\Spt(d)
\eea

\bea
f_{caa}&=&\Spth(a)[d(a-d)+b(b-d)]+2d\Spt(a)+2\Spo(b)-2\Spo(d)
\cr\cr
f_{cab}&=&\Spt(a)(2b-d)+(2a-d)\Spt(b)
\cr\cr
f_{cbb}&=&\Spth(b)[a(a-d)+d(b-d)]+2d\Spt(b)+2\Spo(a)-2\Spo(d)
\cr\cr
f_{aaa}&=&\Spf(a)[cd(a-d)-(1-c)b(b-d)]+3cd\Spth(a)
\cr\cr
f_{aab}&=&\Spth(a)[-2(1-c)b+(1-c)d]+2c\Spt(b)
\cr\cr
f_{abb}&=&\Spth(b)(2ac-dc)-2(1-c)\Spt(a)
\cr\cr
f_{bbb}&=&\Spf(b)[ca(a-d)-(1-c)d(b-d)]-3(1-c)d\Spth(b)
\eea

We now compute the relevant terms at $c=1/2$. (The right hand side of equations (\ref{fdots-1}-\ref{ddf-eq}) are all 
intended to be evaluated at $c=1/2$.)
\bea \label{fdots-1}
f_a(1/2)&=&\Spt(a)[(1/2)d(a-d)-(1/2)a(a-d)]+(1/2)d\Spo(a)\cr
&+&(a-d/2)\Spo(a)-a\Spo(d)\cr
&=&\Spt(a)[-(1/2)(a-d)^2]+a[\Spo(a)-\Spo(d)]
\cr\cr
f_b(1/2)&=&\Spt(a)[(1/2)a(a-d)-(1/2)d(a-d)]+\Spo(a)[(d/2)-b]\cr
&+&\Spo(a)[-(d/2)]+b\Spo(d)\cr
&=&(1/2)\Spt(a)(a-d)^2-a[\Spo(a)-\Spo(d)]\cr
&=&-f_a(1/2),
\eea
so
\be
f_b(1/2)-f_a(1/2)=\Spt(a)(a-d)^2-2a[\Spo(a)-\Spo(d)].
\ee
Note that this is the same as the denominator in the formula (\ref{ado-eq}) for $\ado(1/2)$. 

\bea
f_{ca}(1/2)= f_{cb}(1/2) &=& \Spt(a)(a^2-d^2)+2a[\Spo(a)-\Spo(d)],
\cr
f_{cd}(1/2)&=&2\Spt(d)(d^2-a^2)+4d[\Spo(d)-\Spo(a)]
\cr
f_{caa}(1/2)=f_{cbb}(1/2)&=&\Spth(a)[a^2-d^2]+2d\Spt(a)\cr
&+&2[\Spo(a)-\Spo(d)]
\cr
f_{cab}(1/2)&=&2\Spt(a)(2a-d)
\cr
f_{caa}(1/2)-2f_{cab}(1/2)+f_{cbb}(1/2)&=&2\Spth(a)(a^2-d^2)+8(d-a)\Spt(a) \cr && +4[\Spo(a)-\Spo(d)]
\eea

\bea
f_{aa}(1/2)=-f_{bb}(1/2)&=&\Spth(a)[-(1/2)(a-d)^2]+d\Spt(a)+\Spo(a)-\Spo(d)
\cr
f_{aa}(1/2)-f_{bb}(1/2)&=&-(a-d)^2\Spth(a)+2d\Spt(a)+2[\Spo(a)-\Spo(d)]
\cr
f_{ad}(1/2)=-f_{bd}(1/2)&=&(a-d)\Spt(a)-a\Spt(d)
\cr
f_{ad}(1/2)-f_{bd}(1/2)&=&2(a-d)\Spt(a)-2a\Spt(d)
\eea

\bea 
f_{aaa}(1/2)=-f_{bbb}(1/2)&=&\Spf(a)[-(1/2)(a-d)^2]+(3/2)d\Spth(a)
\cr
f_{aab}(1/2)=-f_{abb}(1/2)&=&(1/2)\Spth(a)(d-2a)+\Spt(a)
\cr
(f_{aaa}-3f_{aab}+3f_{abb}-f_{bbb})(1/2) &=&
-\Spf(a)(a-d)^2+6a\Spth(a)\cr
&-&6\Spt(a)
\eea

Plugging all of these expressions back into equation (\ref{adth-eq1}) yields
\bea \label{adth-eq2}
\adth(1/2) &=&\Big \{[6\Spt(a)(a^2-d^2)+12a(\Spo(a)-\Spo(d))]\adt\cr
&&+[6\Spt(d)(d^2-a^2)+12d(\Spo(d)-\Spo(a))]\ddt\cr
&&+[6\Spth(a)(a^2-d^2)+24(d-a)\Spt(a)+12(\Spo(a)-\Spo(d))]\ado^2\cr
&&+[-3(a-d)^2\Spth(a)+6d\Spt(a) +6(\Spo(a)-\Spo(d))]\ado \adt\cr
&&+6[(a-d)\Spt(a)-a\Spt(d)]\ado \ddt\cr
&&+[-\Spf(a)(a-d)^2+6a\Spth(a)-6\Spt(a)]\ado^3 \Big \}\cr
&& * \left [\Spt(a)(a-d)^2-2a[\Spo(a)-\Spo(d)\right ]^{-1}
\eea

Finally, we need $\adf(1/2)$ and $\ddf(1/2)$. From equations (\ref{Edots}) and (\ref{Tdf-eq}), 
\bea
0=2\Edf(1/2)&=& \adf+\ddf+16\adth+48(\adt-\ddt)
\cr\cr
0=\frac43\Tdf(1/2)&=&(a^2+d^2)\adf+2ad\ddf+8a\ado \adth +6a\adt^2+12\ado^2
\adt+12d\ddt \adt +6a\ddt^2\cr
&&+8[3a^2 \adth +18a\ado \adt +6\ado^3 +\adth d^2 +6\ado d\ddt]\cr
&&+48[3a^2\adt +6a\ado^2 -d^2\adt -2ad\ddt]+64[3a^2\ado -3d^2\ado]
\eea

\bea
0&=&(4/3)\Tdf(1/2) -4ad\Edf(1/2) 
\cr
&=&(a-d)^2\adf +8a\ado \adth +6a\adt^2 +12\ado^2 \adt \cr
&&+12d\ddt \adt +6a\ddt^2\cr
&&+\adth(24a^2+8d^2-32ad)+144a\ado \adt +48\ado^3 +48\ado d\ddt\cr
&&+48[\adt(3a^2-2ad-d^2)+6a\ado^2]+192(a^2-d^2)\ado
\eea
Solving for $\adf$ gives
\bea \label{adf-eq} \adf(1/2) & = & - \Big [8a\ado \adth +6a\adt^2 +12\ado^2 \adt 
+12d\ddt \adt +6a\ddt^2\cr
&&+\adth(24a^2+8d^2-32ad)+144a\ado \adt +48\ado^3 +48\ado d\ddt\cr
&&+48[\adt(3a^2-2ad-d^2)+6a\ado^2]\cr
&&+192(a^2-d^2)\ado \Big ]/(a-d)^2.
\eea
We then obtain
\be \label{ddf-eq}
\ddf(1/2)=-\adf -16\adth -48(\adt-\ddt)
\ee

\subsection{Derivatives of $S$}

Finally, we compute derivatives of the functional $S$.

\be
S=c^2 S_o(a)+(1-c)^2S_o(b)+2c(1-c)S_o(d)
\ee

\bea
\Sdo&=&2cS_o(a)-2(1-c)S_o(b)+2(1-2c)S_o(d)\cr
&&+c^2\Spo(a)\ado +(1-c)^2\Spo(b)\bdo +2c(1-c)\Spo(d)\ddo
\eea

\bea
\Sdt&=&2[S_o(a)+S_o(b)-2S_o(d)]+4c\Spo(a)\ado -4(1-c)\Spo(b)\bdo \cr
&+&4(1-2c)\Spo(d)\ddo+c^2[\Spo(a)\adt +\Spt(a)\ado^2]+(1-c)^2[\Spo(b)\bdt+\Spt(b)\bdo^2]\cr
&+&2c(1-c)[\Spo(d)\ddt +\Spt(d) \ddo^2]
\eea

\bea
\Sdth&=&6[\Spo(a)\ado +\Spo(b)\bdo -2\Spo(d)\ddo]\cr
&&+6c[\Spo(a)\adt +\Spt(a)\ado^2]-6(1-c)[\Spo(b)\bdt+\Spt(b)\bdo^2]\cr
&&+6(1-2c)[\Spo(d)\ddt +\Spt(d)\ddo^2]\cr
&&+c^2[\Spo(a)\adth +3\Spt(a)\ado \adt +\Spth(a)\ado^3]\cr
&&+(1-c)^2[\Spo(b)\bdth +3\Spt(b)\bdo \bdt +\Spth(b)\bdo^3]\cr
&&+2c(1-c)[\Spo(d)\ddth +3\Spt(d)\ddo \ddt +\Spth(d)\ddo^3]
\eea

\bea
\Sdf&=&12[\Spo(a)\adt +\Spt(a)\ado^2 +\Spo(b)\bdt +\Spt(b)\bdo^2
-2\Spo(d)\ddt -2\Spt(d)\ddo^2]\cr
&&+8c[\Spo(a)\adth +3\Spt(a)\ado \adt +\Spth(a)\ado^3]\cr
&&-8(1-c)[\Spo(b)\bdth +3\Spt(b)\bdo \bdt +\Spth(b)\bdo^3]\cr
&&+8(1-2c)[\Spo(d)\ddth+3\Spt(d)\ddo \ddt+\Spth(d)\ddo^3]\cr
&&+ c^2[\Spo(a)\adf+4\Spt(a)\ado \adth+3\Spt(a)\adt^2 +6\Spth(a)\ado^2
\adt+\Spf(a)\ado^4]\cr
&&+(1-c)^2[\Spo(b)\bdf+4\Spt(b)\bdo \bdth+3\Spt(b)\bdt^2
+6\Spth(b)\bdo^2 \bdt+\Spf(b)\bdf]\cr
&&+2c(1-c)[\Spo(d)\ddf+4\Spt(d)\ddo \ddth+3\Spt(d)\ddt^2\cr
&&+6\Spth(d)\ddo^2 \ddt+\Spf(d)\ddo^4]
\eea

Evaluating at $c=1/2$ and using symmetry:
\bea\label{EQ:S2 S4}
\Sdt(1/2)&=&4[S_o(a)-S_o(d)]+4\Spo(a)\ado\cr
&&+(1/2)[\Spo(a)\adt+\Spt(a)\ado^2 +\Spo(d)\ddt]
\cr\cr
\Sdf(1/2) &=&24[\Spo(a)\adt+\Spt(a)\ado^2 -\Spo(d)\ddt]\cr
&&+8[\Spo(a)\adth+3\Spt(a)\ado \adt+\Spth(a)\ado^3])\cr
&&+(1/2)[\Spo(a)\adf+4\Spt(a)\ado \adth+3\Spt(a)\adt^2
+6\Spth(a)\ado^2 \adt +\Spf(a)\ado^4]\cr
&&+(1/2)[\Spo(d)\ddf+3\Spt(d)\ddt^2],
\eea
where the right hand sides of both equations are evaluated at $c=1/2$, using equations~\eqref{ado-eq},~\eqref{adt-eq2},~\eqref{ddt-eq},~\eqref{adth-eq2},~\eqref{adf-eq}, and~\eqref{ddf-eq}.

\section{Expansion near the triple 
point $(\E,\T)=\left (\frac12,\frac18
\right )$}
\label{SEC:triple}

The phase transition between phases I and II occurs where $\ddot S=0$. This curve intersects the ER curve $\T=\E^3$ at $(\E,\T)=\left ( \frac12, \frac18 \right)$. This is a triple point, where
phases I, II and III meet. In this section we prove

\begin{theorem} \label{Thm:Cubic} 
Near $(\E,\T)= \left (\frac12, \frac18 \right )$, the 
boundary between the symmetric and asymmetric bipodal phases
takes the form $\T = \E^3 - 8 \left (\E - \frac12 \right )^3
+ O\left ( \E - \frac12 \right )^4.$  
\end{theorem}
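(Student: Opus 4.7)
Write $h := \E - \tfrac{1}{2}$ and $\delta := (\E^3 - \T)^{1/3}$ (real cube root, possibly negative), so that the symmetric bipodal graphon of~\eqref{EQ:Graphon2} has parameters $a = b = \tfrac{1}{2} + h - \delta$, $d = \tfrac{1}{2} + h + \delta$, $c = \tfrac{1}{2}$. In these coordinates the ER curve is $\{\delta = 0\}$, the triple point is the origin $(h,\delta)=(0,0)$, and the symmetric/asymmetric transition is the zero set of $\Sdt(\tfrac{1}{2})$ near that point. My plan is to Taylor expand $\Sdt(\tfrac{1}{2})$ jointly in $(h,\delta)$ to total order four, identify the leading surviving factor, solve $\Sdt(\tfrac{1}{2}) = 0$ for $\delta$ as a function of $h$, and recover $\T$ through $\T = \E^3 - \delta^3$.

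The expansion is driven by the identities $\Spo(\tfrac{1}{2}) = \Spth(\tfrac{1}{2}) = 0$, $\Spt(\tfrac{1}{2}) = -2$, $\Spf(\tfrac{1}{2}) = -16$; the vanishing of odd derivatives at $\tfrac{1}{2}$ forces most of the structure. Substituting into~\eqref{ado-eq}, \eqref{adt-eq2}, and~\eqref{ddt-eq} produces
\[
\ado(\tfrac{1}{2}) = 4\delta + O(4), \qquad \adt(\tfrac{1}{2}) = -16\delta + O(4), \qquad \ddt(\tfrac{1}{2}) = O(4),
\]
where $O(n)$ denotes total degree at least $n$ in $(h,\delta)$. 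Feeding these into~\eqref{EQ:S2 S4} and collecting, I first observe that $\Sdt(\tfrac{1}{2})$ is divisible by $\delta$: the ER graphon at $\delta = 0$ is constant, so $\Sdt \equiv 0$ on $\{\delta = 0\}$ and no pure monomial $h^k$ can appear. The remaining order-two monomials $h\delta$ and $\delta^2$ each pick up contributions from three of the four summands of~\eqref{EQ:S2 S4}, and these cancel in sum. Continuing to fourth total order, the $h^3\delta$ and $h^2\delta^2$ coefficients likewise cancel, and what survives is
\[
\Sdt(\tfrac{1}{2}) \;=\; \tfrac{128}{3}\,\delta^3(2h - \delta) + O\bigl((|h|+|\delta|)^5\bigr).
\]

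The factored form exhibits both phase boundaries through the triple point: $\delta = 0$ recovers the ER curve, while the symmetric/asymmetric transition is $\delta = 2h + O(h^2)$, where the quadratic correction is fixed by balancing against the fifth-order remainder. Cubing gives $\delta^3 = 8 h^3 + O(h^4)$, so $\T = \E^3 - \delta^3 = \E^3 - 8(\E - \tfrac{1}{2})^3 + O((\E - \tfrac{1}{2})^4)$, which is the theorem. The main obstacle is the bookkeeping: each of the four summands of~\eqref{EQ:S2 S4} contributes several monomials at orders two and four, and the cascade of cancellations (first the three order-two coefficients, then the $h^3\delta$ and $h^2\delta^2$ coefficients) must be checked term by term. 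Once $\ado$, $\adt$, $\ddt$ have been expanded to the required order using the parity of $S_o$ at $\tfrac{1}{2}$, however, the remaining computation is purely mechanical.
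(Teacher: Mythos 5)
Your proposal is correct and is, at its core, the paper's own argument: both identify the symmetric/asymmetric boundary with the zero set of $\Sdt(\frac12)$, expand about the triple point, find that all contributions cancel through the first nontrivial orders, and convert the surviving leading term back to $(\E,\T)$ via $\E=(a+d)/2$ and $\T-\E^3=(a-d)^3/8$. Your intermediate claims ($\ado(\frac12)=4\delta+O(4)$, $\adt(\frac12)=-16\delta+O(4)$, $\ddt(\frac12)=O(4)$) and your leading term $\frac{128}{3}\delta^3(2h-\delta)$ are all consistent with the paper: in its variables $\delta a=h-\delta$, $\delta d=h+\delta$, the paper finds $\Delta_5=\frac{32}{3}(\delta a-\delta d)^4(3\delta a+\delta d)$ with $\Sdt=-\Delta/(4A)$, which reproduces your expression exactly, and $3\delta a+\delta d=0$ is your $\delta=2h$. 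The genuine difference is how the cancellations are organized. You expand $\Sdt$ head-on in $(h,\delta)$ and defer a term-by-term check of the ``cascade of cancellations''; the paper instead writes $\Sdt=A\alpha^2+B\alpha+C=-(B^2-4AC)/(4A)$ with $\alpha=\ado/(a-d)$, so that (away from $a=d$) the boundary is the vanishing of the discriminant $B^2-4AC$. The vanishing through fourth order is then immediate from the identity $4A_m=B_m=C_m$ for $m=1,2,3$, and the further observation that every term of the discriminant is divisible by $(\delta a-\delta d)^4$ is what lets one divide out and solve cleanly for the transition curve --- this is the rigorous version of your somewhat vague step in which ``the quadratic correction is fixed by balancing against the fifth-order remainder.'' Your route would reach the same answer, but the $A$, $B$, $C$ factorization is worth adopting: it replaces the deferred bookkeeping (the part of your proposal that is asserted rather than verified) with two short structural identities, and it automatically controls the remainder when passing from $\Sdt=0$ to $\delta=2h+O(h^2)$.
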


\begin{proof} Let 
\begin{eqnarray}
A(a,d) & = & \frac12 \Spt(a) (a-d)^2 - a\left ( \Spo(a) - \Spo(d) \right ).\cr
B(a,d) & = & - 2 (a+d) \left ( \Spo(a) - \Spo(d) \right ).\cr
C(a,d) & = & 4 \left ( S_o(a) - a S_o'(a) - S_o(d) + d S_o'(d) \right ).
\end{eqnarray}
Then 
\be
\alpha = \frac{-B}{2A},
\ee
and our formula for $\ddot S$ works out to 
\be \ddot S = A \alpha^2 + B \alpha + C = - \frac{B^2-4AC}{4A}. \ee
As long as $A \ne 0 $ (i.e. $d \ne a$), 
the phase transition occurs precisely where the discriminant
$\Delta = B^2-4AC$ vanishes. 

We now do series expansions for $A$, $B$ and $C$ in powers of 
$\delta a := a - \frac12$ and $\delta d := d - \frac12$. We begin with 
the Taylor Series for $S_o(z)$ and its derivatives.
\begin{eqnarray}
S_o(z) & = & \frac{\ln(2)}{2} -  \sum_{n=0}^\infty 
\frac{4^n \left (z -\frac12 \right )^{2n+2}}
{(n+1)(2n+1)} \cr 
S_o'(z) & = & -2 \sum_{n=0}^\infty \frac{4^n \left (z-\frac12 \right )^{2n+1}}
{2n+1} \cr 
S_o''(z) & = & -2 \sum_{n=0}^\infty 4^n \left (z - \frac12 \right )^{2n}.
\end{eqnarray}
(To derive these expressions, expand 
\be S_o''(z) = \frac{-1}{2} \left ( \frac{1}{z} + \frac {1}{1-z}
\right ) = \frac{-2}{1-4\left ( z-\frac12 \right)^2}
\ee
as a geometric series, and then integrate term by term to obtain the series
for $S_o'(z)$ and $S_o(z)$.)  
Plugging these expansions into the formula for $A$ yields
\be A = -(\delta a - \delta d)^2 \sum_{n=0}^\infty 4^n (\delta a)^{2n}
+ (1+2\delta a) \sum_{n=0}^\infty \frac{4^n}{2n+1} \left ( 
\delta a^{2n+1} - \delta d^{2n+1} \right ) := \sum_{m=1}^\infty A_m, \ee
where $A_m$ is a homogeneous $m$-th order polynomial in $\delta a$ and 
$\delta d$. The first few terms are:
\begin{eqnarray}
A_1 & = & \delta a - \delta d \cr 
A_2 & = & \delta a^2 - \delta d^2 \cr 
A_3 & = & \frac{4}{3} \left ( \delta a^3 - \delta d^3 \right ) \cr
A_4 & = & \frac43 \left ( - \delta a^4 + 6 \delta a^3\delta d -3 \delta a^2
\delta d^2 -2 \delta a \delta d^3 \right ).
\end{eqnarray}

We do similar expansions of $B$ and $C$:
\begin{eqnarray}
B & = & 4(a+\delta a + \delta d)\sum_{n=1}^\infty \frac{4^n \left (
\delta a^{2n+1} - \delta d^{2n+1} \right )}{2n+1} = \sum_{m=1}^\infty B_m \cr 
B_1 & = & 4(\delta a - \delta d) \cr 
B_2 & = & 4(\delta a^2 - \delta d^2) \cr 
B_3 & = & \frac{16}{3} \left ( \delta a^3 - \delta d^3 \right ) \cr
B_4 & = & \frac{16}3 \left ( \delta a^4 + \delta a^3 \delta d - 
\delta a \delta d^3 - \delta d^4 \right )
\end{eqnarray}

\begin{eqnarray}
C & = & 4 \sum_{n=0}^\infty \frac{4^n \left ( \delta a^{2n+1}-\delta d^{2n+1}\right)}
{2n+1} + 4 \sum_{n=0}^\infty \frac{4^n \left ( \delta a^{2n+2}-\delta d^{2n+2}\right)}
{n+1}  = \sum_{m=1}^\infty C_m \cr 
C_1 & = & 4(\delta a - \delta d) \cr 
C_2 & = & 4(\delta a^2 - \delta d^2) \cr 
C_3 & = & \frac{16}{3} \left ( \delta a^3 - \delta d^3 \right )
\cr
C_4 & = & 8 \left ( \delta a^4 - \delta d^4 \right ).
\end{eqnarray}
Note that $4A_m=B_m=C_m$ when $m=1$, 2, or 3. This implies that
the discriminant $\Delta$ vanishes through 4th order, and the leading
nonzero term is 
\begin{eqnarray}
\Delta_5 & = & 2B_1B_4 + 2B_2B_3 - 4A_1C_4-4A_2C_3 -4A_3C_2 - 4A_4C_1 \cr
&=& 2B_1 B_4 + 2 B_2B_3 -B_1C_4 -B_2B_3 -B_3B_2-4A_4C_1 \cr 
&=& B_1(2B_4-C_4-4A_4) \cr 
&=& \frac{8B_1}{3} \left (3 \delta a^4 - 8\delta a^3 \delta d 
+ 6 \delta a^2 \delta d^2 - \delta d^4 \right ) \cr 
&=& \frac{32}{3}(\delta a - \delta d)^4 (3 \delta a + \delta d). 
\end{eqnarray}
In fact, all terms in the expansion of $\Delta$ are divisible by 
$(\delta a -\delta d)^4$, as can be seen by evaluating $\Delta$ and its 
first three derivatives with respect to $\delta d$ at $\delta d=\delta a$.
We can view $3 \delta a + \delta d$ as the 
{\em leading} term in the expansion of 
$\frac{3\Delta}{32(\delta a - \delta d)^4}$. Setting $\Delta=0$ then gives
\be\delta d = -3 \delta a + O(\delta a^2).\ee 
Since 
\be \E = \frac{a+d}{2}; \qquad \T-\E^3 = \frac{(a-d)^3}{8}, \ee
we have 
\be \T = \E^3 -8 \left (\E - \frac12 \right )^3 + 
O ((\E-\frac12 )^4 ), \ee
as required. Note that these expressions only apply when $\delta a<0$, i.e.,
when $d>a$, i.e., for $\E > \frac12$. When $\E<\frac12$, bipodal graphons with
$\delta d = -3\delta a$ would lie {\em above} the ER curve. 

\end{proof}  
 
\section{Numerical values of $\ddot S$ and $\ddddot S$}
\label{SEC:Num Eval}

We now evaluate $\Sdt$ and $\Sdf$ numerically,
following the formulas given in~\eqref{EQ:S2 S4}, with help from formulas~\eqref{ado-eq},~\eqref{adt-eq2},~\eqref{ddt-eq},~\eqref{adth-eq2},~\eqref{adf-eq}, and ~\eqref{ddf-eq}.

\medskip

We show first in Figure~\ref{FIG:Boundary} the boundary of the symmetrical bipodal phase. For better visualization, we use the coordinate $(\E, \T-\E^3)$ instead of $(\E, \T)$ (in which the phase boundary curve bends too much to see the details). In this new coordinate, the ER curve becomes the curve $(\E, 0)$ which is the upper boundary in the plot. The part of the phase boundary on which $\ddot S=0$, denoted by $\Sigma$, is illustrated with thick red line.
\begin{figure}[!ht]
\centering
\includegraphics[angle=0,width=0.45\textwidth]{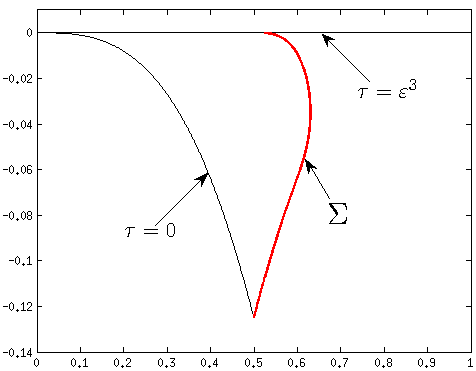}
\caption{The boundary of the bipodal phase in the $(\E, \T-\E^3)$ coordinate. The part of phase boundary on which $\ddot S=0$ is shown in the thick red line.
Note how this line is tangent to the ER curve at the triple point, as required
by Theorem~\ref{Thm:Cubic}.}
\label{FIG:Boundary}
\end{figure}

\begin{figure}[!ht]
\centering
\includegraphics[angle=0,width=0.35\textwidth]{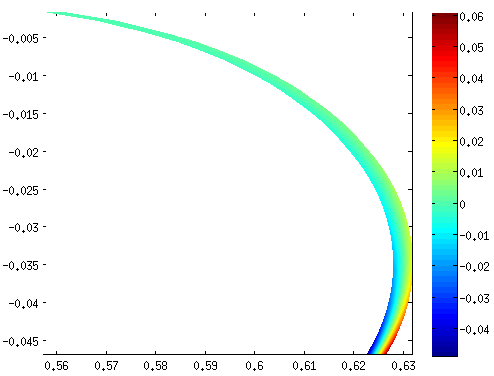} \hskip 1cm
\includegraphics[angle=0,width=0.35\textwidth]{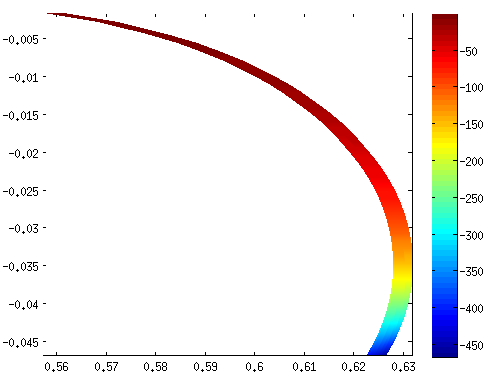}\\ 
\includegraphics[angle=0,width=0.35\textwidth]{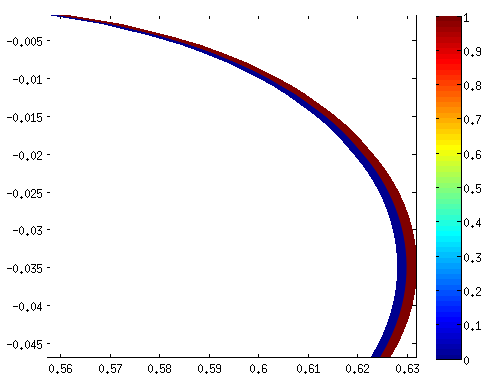}  \hskip 1cm
\includegraphics[angle=0,width=0.35\textwidth]{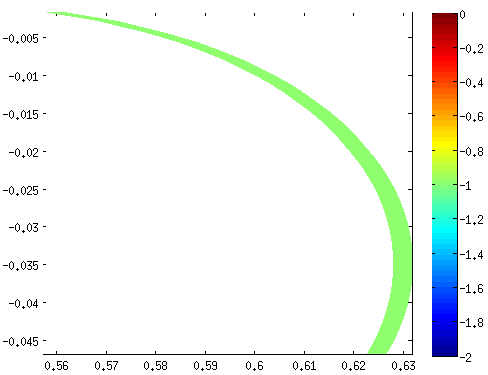} 
\caption{The function $\ddot S$ (top left), $\sgn(S)$ (bottom left), $\ddddot S$ (top right) and $\sgn(\ddddot S)$ (bottom right) in the neighborhood of the phase transition curve $\Sigma$.}
\label{FIG:S2 S4}
\end{figure}
In Figure~\ref{FIG:S2 S4}, we show the values of $\ddot S$ and $\ddddot S$ along a tube along the phase transition curve $\Sigma$. The plot is again in the $(\E, \T-\E^3)$ coordinate. To better visualize the transition, we visualize the function $\sgn(\ddot S)$ and $\sgn(\ddddot S)$ where $\sgn$ is the sign function: $\sgn(x)=1$ when $x\ge 0$ and $\sgn(x)=-1$ when $x<0$. The function $\dfrac{\partial\ddot S}{\partial \E}$ is show in Figure~\ref{FIG:pSpe} along the phase transition curve $\Sigma$.
\begin{figure}[ht]
\centering
\includegraphics[angle=0,width=0.45\textwidth]{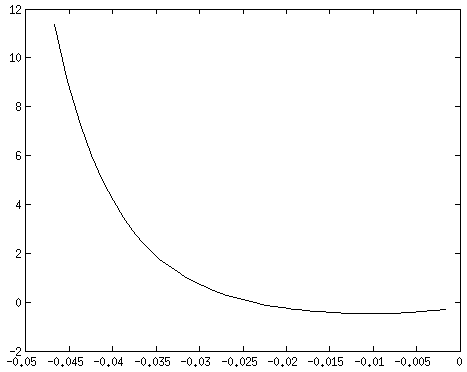} 
\caption{The plot of $\dfrac{\partial\ddot S}{\partial \E}$ on the curve $\Sigma$, i.e. as a function of $\T-\E^3$.}
\label{FIG:pSpe}
\end{figure}

The point is that $\ddddot S$ is always negative when $\ddot S=0$. Near
the transition, we should thus expect optimizing graphons to have 
$|c - \frac12| \approx \sqrt{-6 \ddot S(\frac12)/\ddddot S(\frac12)}$
when $\ddot S(\frac12)>0$, and to have $c = \frac12$ when $\ddot S(\frac12)<0$. 
In the next section we confirm this prediction with direct sampling of 
graphons. 
 
\section{Comparison to numerical sampling}
\label{SEC:Simulation}

In this section, we perform some numerical simulations using the sampling 
algorithm we developed in~\cite{RRS}, and compare them to the results of the 
perturbation analysis. The sampling algorithm construct random samples of 
values of $\mathcal S$ in the parameter space graphons, and then take the maximum of 
the sampled values. This sampling algorithm is extremely expensive 
computationally, but when sufficiently large sample size are reached, we can 
achieve desired accuracy; see the discussions in~\cite{RRS}. We emphasize that 
our sampling algorithm does not assume bipodality of the maximizing graphons. 
In fact, we always start with the assumption that the graphons are $16$-podal. 
Bipodal structures are found when all the other clusters have size zero.

\medskip

\begin{figure}[ht]
\centering
\includegraphics[angle=0,width=0.45\textwidth]{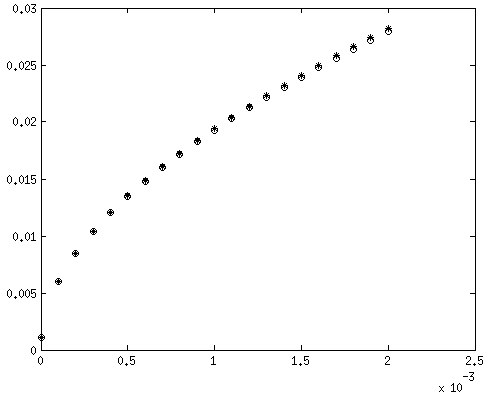} \hskip 1cm
\includegraphics[angle=0,width=0.45\textwidth]{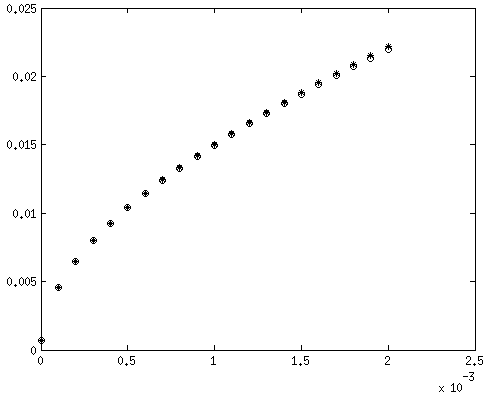}\\ 
\caption{Comparison between the curve $(\E, |c-\frac{1}{2}|)$ (in $\circ$, given by the sampling algorithm) and $(\E, \sqrt{-6\ddot S(\frac12)/\ddddot S(\frac12)})$ (in $*$, given by the perturbation calculation) at two different $\T-\E^3$ values: $-0.0348$ (left), $-0.0300$ (right).}
\label{FIG:Cross}
\end{figure}
In Figure~\ref{FIG:Cross}, we plot the values of $|c - \frac12|$ for optimizing graphons (found with the numerical sampling algorithm) and the real part of $\sqrt{-6\ddot S(\frac12)/\ddddot S(\frac12)}$ (given by the perturbation analysis) as functions of $\E$ along at two different $\T-\E^3$ values. It is easily seen that the perturbation calculation gives very good fit when we are reasonably close to the phase transition curve $\Sigma$, but starts to be less accurate when we go farther out. A similar plot for the corresponding values of $a$, $b$, and $d$ are shown in Figure~\ref{FIG:a b d versus E}. As expected, $a$ and $d$ show square root singularities, just like $c$, but $b$ does not.
\begin{figure}[!ht]
\centering
\includegraphics[angle=0,width=0.3\textwidth]{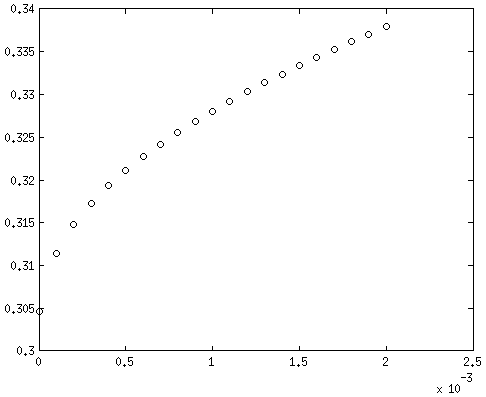} 
\includegraphics[angle=0,width=0.3\textwidth]{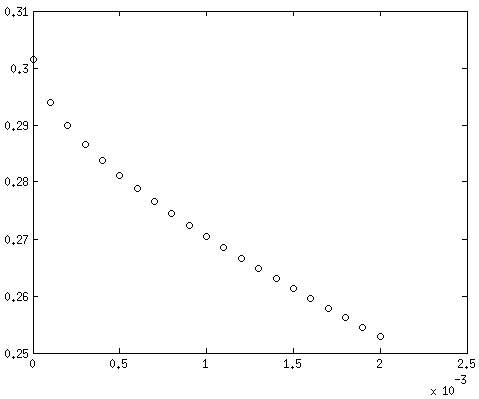} 
\includegraphics[angle=0,width=0.3\textwidth]{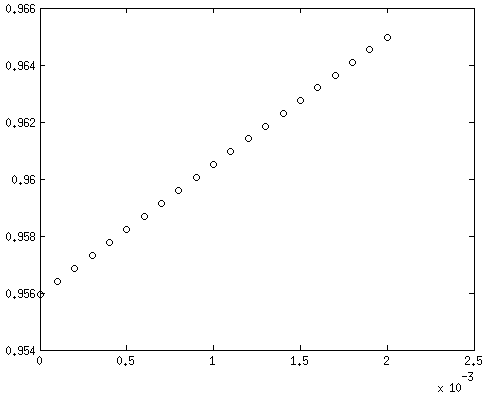}\\ 
\includegraphics[angle=0,width=0.3\textwidth]{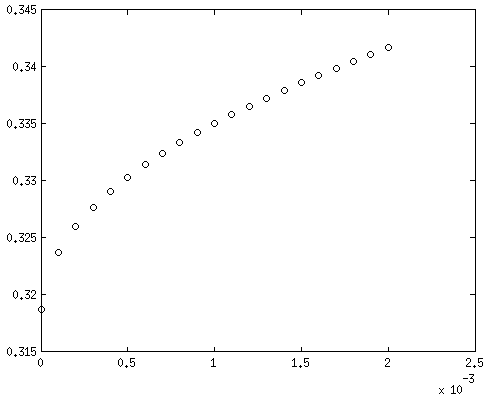} 
\includegraphics[angle=0,width=0.3\textwidth]{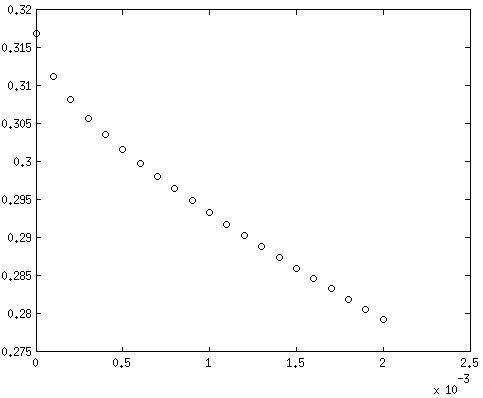} 
\includegraphics[angle=0,width=0.3\textwidth]{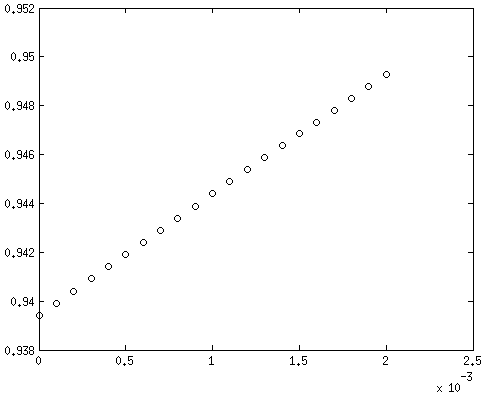}\\ 
\caption{The plots of $a$ (left), $b$ (middle) and $d$ (right) versus $\E-\E_0$ at two different values of $\T$: $0.2147$ (top row) and $0.2184$ (bottom row).}
\label{FIG:a b d versus E}
\end{figure}

\medskip

Finally, we show in Figure~\ref{FIG:Graphons} some typical graphons that we obtained using the sampling algorithm. We emphasize again that in the sampling algorithm, we did not assume bipodality of the optimizing graphons. The numerics indicate that the optimizing graphons are really bipodal close to (on both sides) the phase transition curve $\Sigma$. These serve as numerical evidence to justify the perturbation calculations in this work.  
\begin{figure}[ht]
\centering
\includegraphics[angle=0,width=0.4\textwidth]{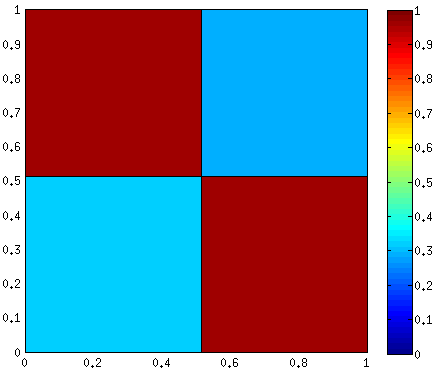} 
\includegraphics[angle=0,width=0.4\textwidth,height=0.336\textwidth]{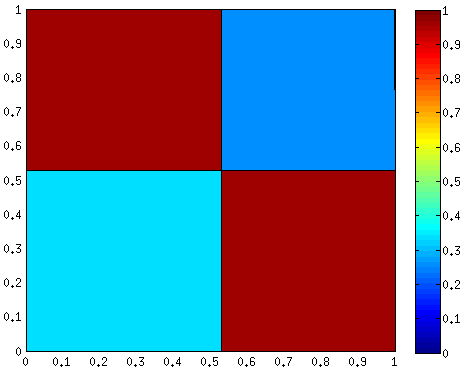}\\ 
\includegraphics[angle=0,width=0.4\textwidth]{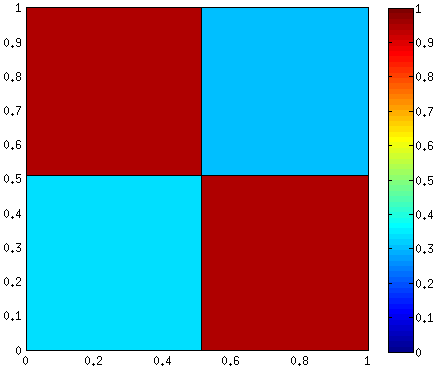} 
\includegraphics[angle=0,width=0.4\textwidth]{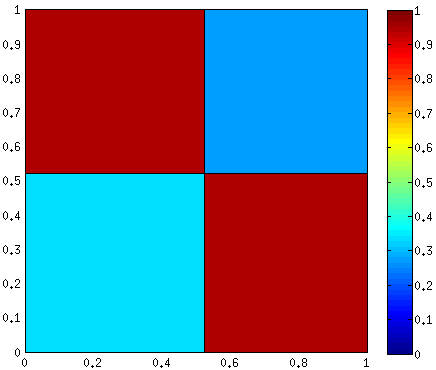} 
\caption{Typical graphons at points close to the phase boundary. The values of $(\E, \T)$ are (from top left to bottom right): $(0.6299,0.2147)$, $(0.6315, 0.2147)$, $(0.6290, 0.2184)$ and $(0.6306,0.2184)$.}
\label{FIG:Graphons}
\end{figure}

\section{Conclusion}
\label{SEC:Concl}

Our analysis began with the assumption that all entropy optimizers for
relevant constraint parameters are multipodal; in fact bipodal, but
we emphasize the more general feature. Multipodality is a
fundamental aspect of the asymptotics of constrained graphs. It is the
embodiment of phase emergence: the set of graphs with fixed constraints develops
(``emerges into'') a well-defined global state as the number of nodes
diverges, \emph{by partitioning the set of all nodes into a finite
(usually small) number of equivalent parts,} $\{P_1,P_2,\ldots\}$, of
relative sizes $c_j$, and uniform (i.e.\ constant) probability $p_{ij}$
of an edge between nodes in $P_i$ and $P_j$. This unexpected fact has
been seen in all simulations, and was actually proven throughout the phase spaces
of edge/$k$-star models~\cite{KRRS1} and in particular regions of a wide class of
models~\cite{KRRS2}. In this paper we are taking this as given, for a certain
region of the edge/triangle phase space; we are assuming that
our large constrained graphs emerge into such global states, and not
just multipodal states but bipodal states, in accordance with
simulation of the relevant constraint values in this model.

We analyzed the specific constraints of edge density approximately $1/2$ and triangle density less than $1/8$, and drew a
variety of conclusions. First we used the facts, proven in~\cite{RRS}, that
there is a smooth curve in the phase space, indicated in Figure~\ref{scallop}, such
that: (i) to the left of the curve the (reduced) entropy optimizer is
unique and is given by~\eqref{EQ:Graphon2}; (ii) the entropy optimizer is also unique to
the right of the curve but is no longer symmetric. The curve thus
represents a boundary between distinct phases of different
symmetry. These facts were established in~\cite{RRS}. The analysis in this
paper concerns the details of this transition. Here we want to speak
to the significance of the results, in terms of symmetry breaking.

We take the viewpoint that the global states for these constraint
values are sitting in a 4 dimensional space given by $a, b,
d$ and $c$, quantities which describe the laws of interaction
of the elements of the parts $P_j$ into which the node set is
partitioned. (The notion of `vertex type' has been introduced and
analyzed in \cite{Ko} and is useful in more general contexts.)

The symmetry $c_1=c_2=1/2$ and $a=b$ is therefore a symmetry
of the \emph{rules} by which the global state is produced. Each part
$P_j$ into which the set of nodes is partitioned can be thought of as
embodying the symmetry between the nodes it contains, but the symmetry
of the global states is a higher-level symmetry, between the way these equivalence
classes $P_j$ are connected. In this way the transition studied in
this paper is an analogue of the symmetry-breaking fluid/crystal
transition studied in the statistical analysis of matter in thermal
equilibrium~\cite{Ru2}.

One consequence is a contribution to the old problem concerning the
fluid/crystal transition. It has been known experimentally since the
work of PW Bridgman in the early twentieth century that no matter how one varies
the thermodynamic parameters (say mass and energy density) between
fluid and crystal phases one \emph{must} go through a singularity or
phase transition. An influential theoretical analysis by L. Landau
attributed this basic fact to the difference in symmetry between the
crystal and fluid states, but this argument has never been completely
convincing~\cite{Pip}.  In our model this fact follows for phases $II$
and $III$ from the two-step process of first simplifying the space of
global states to a finite dimensional space (of interaction rules) and
then realizing the symmetry as acting in that space; it is then
immediate that phases with different symmetry cannot be linked by a
smooth (analytic) curve.  This is one straightforward consequence of
the powerful advantage of understanding our global states as
multipodal, when compared say with equilibrium statistical mechanics.

\section*{Acknowledgments}

This work was partially supported by NSF grants DMS-1208191,
DMS-1509088, DMS-1321018, DMS-1101326 and PHY-1125915.



\end{document}